\newtheorem{theorem}{Theorem}[section]
\newtheorem{lemma}[theorem]{Lemma}
\theoremstyle{definition}
\newtheorem{proposition}[theorem]{Proposition}
\theoremstyle{remark}
\newtheorem{remark}[theorem]{Remark}
\numberwithin{equation}{section}
\newcommand{\rn}[1]{\mathbb{R}^{#1}}
\newcommand{\zero}{{\bf 0}}
\newcommand{\bq}{\begin{equation}}
\newcommand{\eq}{\end{equation}}
\begin{document}

\title[Equivalence Between Reversible and Hamiltonian Dynamical Systems]
{Formal Equivalence Between Normal Forms of Reversible and Hamiltonian Dynamical Systems}

\author[R.M. Martins]{Ricardo Miranda Martins}
\address{Department of Mathematics, IMECC, Unicamp, CEP 13083-970, Campinas SP, Brazil}
\curraddr{}
\email[R.M. Martins]{rmiranda@ime.unicamp.br}

\thanks{$^*$ The author is supported by a FAPESP--BRAZIL grant 2007/05215-4. }

\subjclass[2010]{34C14, 34C20, 37J15, 37J40}

%    Abstract is required.
\begin{abstract}
We show the existence of formal equivalences between reversible and Hamiltonian vector fields. The main tool we employ is the normal form theory. 
\end{abstract}

\maketitle
%\small
%\tableofcontents
%\normalsize

\section{Introduction}

In this paper we deal with $C^\infty$ reversible vector fields on
 $\mathbb{R}^{2n}$. These objects are assumed to have a simple and symmetric equilibrium at ${\bf 0}.$ Our main concern is to investigate whether there exists a sequence of changes of coordinates and time-reparametrizations around the origin that turn
 the truncations of the Taylor expansion of the vector field at any given order into a Hamiltonian vector field.

One of the main motivations of this work is a question due to Arnold in his book {\it Arnold's Problems} (\cite{ap}):

{\it Question 1984-23: Develop a ``supertheory'' whose even component corresponds to reversible systems, and odd one to Hamiltonian systems.}

Several years before Arnold posing this question, the similarity between reversible and Hamiltonian vector fields was investigated by some authors. As far we know, the first to consider this question was Birkhoff \cite{birkhoff1} at the beginning of the last century. Price \cite{baley1} and Devaney \cite{devaney} also noted such similarity.

A lot of Hamiltonian results has been successfully adapted to reversible systems, for instance KAM Theory and the Lyapunov Center Theorems. Since then, authors try to understand the real role of the symplectic structure on the Hamiltonian-oriented results, and when it is possible to replace the symplectic structure by a time-reversal symmetry. 

Our concern is to unify both theories from a formal point of view. We say that two vector fields are formally conjugate if there exists  a formal change of coordinates transforming one vector field to the other (to any given order, without regard for the convergence of the transformation in the limit where the order goes to infinity).  Two vector fields $X,Y:\rn{n}\rightarrow\rn{n}$  are said to be formally orbitally equivalent if there is a smooth function $f:\mathbb{R}^n\to\mathbb{R}$ with no zeros near $0$, so that $f\cdot X$ ($X$ multiplied by $f$)  is formally conjugate to $Y$. The multiplication by $f$ has the interpretation of a time-reparametrization of the orbits of $X$. Formal conjugacy between two vector fields implies formal orbital equivalence but not vice versa.

We consider the question of whether normal forms of reversible vector fields can be made Hamiltonian using change of coordinates and time-reparametrizations. Low dimensional versions of this problem were discussed in \cite{mt-cpaa,sand,rp}. Here we focus on $2n$-dimensional reversible vector fields with a elliptic or saddle singularity in the origin. %The saddle and saddle-elliptic cases can be reduced to linear and elliptic, respectively, using Hartman-Grobman and Center Manifold Theorems.

Let $\Omega_0^{2n}$ be the set of germs of $C^\infty$ vector fields $X:\rn{2n},0\rightarrow\rn{2n},0$ with $X({\bf 0})={\bf 0}$ and such that ${\bf 0}$ is a simple singularity, that is, $\det DX({\bf 0})\neq 0$.

Fix an involution $\varphi:\rn{2n},0\rightarrow\rn{2n},0$ with $\varphi({\bf 0})={\bf 0}$ and $\dim {\rm Fix}(\varphi)=n$. Recall that a vector field $X:\rn{2n}\rightarrow\rn{2n}$ is $\varphi$-reversible if $\varphi\circ X_t=X_{-t}\circ\varphi$, where $X_t$ is the flow of $X$. Denote by $\Omega_0^{2n}(\varphi)\subset \Omega_0^{2n}$ the subset of $\varphi$-reversible vector fields.% All the next definitions can be made inside $\Omega_0^{2n}(\varphi)$.

We distinguish two subsets of $\Omega_0^{2n}(\varphi)$: $\Omega_{0,e}^{2n}(\varphi)$ and $\Omega_{0,s}^{2n}(\varphi)$. If $X\in\Omega_{0,e}^{2n}(\varphi)$ then $\zero$ is an elliptic singularity for $X$, and if $X\in\Omega_{0,s}^{2n}(\varphi)$, then $\zero$ is a saddle singularity for $X$.

For $X\in\Omega_{0,e}^{2n}(\varphi)$, we denote $\sigma(DX(\zero))=\{\pm \alpha_1 i,\ldots,\pm \alpha_n i\}$, and for $X\in\Omega_{0,s}^{2n}(\varphi)$ we denote $\sigma(DX(\zero))=\{\pm \beta_1,\ldots,\pm \beta_n\}$, for $\alpha_j,\beta_j\geq 0$, $j=1,\ldots, n$.

For $X\in\Omega_{0,e}^{2n}(\varphi)$, the origin is said to be elliptic $p_1:\ldots:p_k$-resonant if there is $\lambda\neq 0$ such that $\alpha_{j}=\lambda p_j$, for $j=1,\ldots, k\leq n$ and $\{\alpha_{k+1},\ldots,\alpha_n\}$ is linearly independent over $\mathbb Q$ (possibly after a reordering). If $\{\alpha_1,\ldots,\alpha_n\}$ is a linear independent set over $\mathbb Q$, then the origin is said elliptic nonresonant. A similar definition holds for saddle singularities.

Denote the subsets of vector fields with a nonresonant singularity at $\zero$ of elliptic and saddle types, respectively, by $\Omega_{0,e,\infty}^{2n}(\varphi)\subset \Omega_{0,e}^{2n}(\varphi)$ and $\Omega_{0,s,\infty}^{2n}(\varphi)\subset \Omega_{0,s}^{2n}(\varphi)$.
%The hypothesis that ${\bf 0}$ is a generic and nonresonant equilibria is equivalent to the linear independence over $\mathbb Q$ of the set $\{\alpha_1,\ldots,\alpha_r\}$.

%Let us denote by $\tilde X$ be the normal form of a vector field $X$ and $j^3 \tilde X$ the third-order truncation of $\tilde X$.

Our main results are the following.\\

\noindent {\bf Theorem A:} Let $X\in\Omega_{0,e,\infty}^{2n}(\varphi) \, \cup \, \Omega_{0,s,\infty}^{2n}(\varphi)$ and suppose that $j^3 X$ is a Hamiltonian vector field. Then $X$ is generically formally orbitally equivalent to a Hamiltonian vector field.\\

In dimensions 2 and 4, the resulting Hamiltonian vector field in Theorem A can be taken decoupled and polynomial, as proved in \cite{rp,mt-cpaa}. For $n\geq 3$ (dimension greater than 6) it is not possible to attain such result, and generically the simplest Hamiltonian vector field that can be obtained is a decoupled vector field, up to some lower order jets.\\

\noindent {\bf Corollary 1:} Let $X\in\Omega_{0,e,\infty}^{6}(\varphi) \, \cup \, \Omega_{0,s,\infty}^{6}(\varphi)$ and suppose that $j^3 X$ is a Hamiltonian vector field. Then $X$ is generically formally orbitally equivalent to a Hamiltonian vector field $Y$ such that $Y-j^3Y$ is a decoupled Hamiltonian vector field.\\

In dimension 6 it is possible to attain the Hamiltonian structure without using time-reparametrizations.\\

\noindent {\bf Corollary 2:} Let $X\in\Omega_{0,e,\infty}^{6} (\varphi) \, \cup \, \Omega_{0,s,\infty}^{6}(\varphi)$ and suppose that $j^3 X$ is a Hamiltonian vector field. Then $X$ is generically conjugated to a Hamiltonian vector field.\\

\begin{remark}The hypothesis ``{\it $j^3 X$ is a Hamiltonian vector field}" in Theorem A and Corollaries 1 and 2 is general in the following sense: Let $Y\in\Omega_{0,e,\infty}^{6}(\varphi) \, \cup \, \Omega_{0,s,\infty}^{6} (\varphi)$ be a reversible vector field. If $Y$ is conjugated to a vector field $X$ such that Theorem A or Corollaries 1 or 2 holds for $X$, then $j^3 Y$ is hamiltonian.
\end{remark}

While the above results are for reversible vector fields with nonresonant equilibria, using some normal form results in \cite{mt-aabc} it is possible to adapt them to reversible-equivariant vector fields with a resonant equilibria. 

Given a finite group of involutive diffeomorphisms $G$ and a group homomorphism $\rho:G\rightarrow\{-1,1\}$, we say that a vector field $X$ is $G$-reversible-equivariant if $\psi\circ X_t= X_{\rho(\psi)} \circ \psi$ for all $\psi\in G$. Details on reversible-equivariant vector fields can be seen on \cite{paty}.

Here we consider $G=\langle g_1,g_2\rangle$ a group generated by two involutions and $G$-reversible-equivariant vector fields that are $g_1$-reversible and $g_2$-reversible. For this kind of reversible-equivariant vector fields, simplified normal forms are given in \cite{mt-aabc}. We fix $G=\mathbb D_4$.

Consider $\Omega_{0,e}^{2n}(\mathbb D_4) \, \cup \, \Omega_{0,e}^{2n}$ the subset of $\mathbb D_4$-reversible vector fields.

The main results for $\mathbb D_4$-reversible-equivariant vector fields are the following.\\

\noindent {\bf Theorem B:} Let $X\in\Omega_{0,e}^{4}(\mathbb D_4)$. Consider that the origin is $r_1:r_2$-resonant. Suppose that $r_1,r_2$ are odd numbers with $r_1r_2>1$. Then $X$ is generically formally orbitally equivalent to a Hamiltonian vector field.\\

Theorem B generalizes the conjugacy results in \cite{mt-cpaa}, but restricted to reversible-equivariant vector fields.\\

This paper is organized as follows. In section 2 we present some results on reversible and Hamiltonian vector fields and normal forms. In section 3 we present the state of the art of the equivalence problem between reversible and Hamiltonian vector fields. In section 4 we provide a detailed statement of Theorem A and in section 5 we prove Theorem A and its corollaries. In section 6 we prove Theorem B.

\section{Background}

In this section we present some basic results that we shall need throughout this paper. Classical references on these subjects are \cite{marco1} and \cite{mar1}.

\subsection{Reversible vector fields}

A vector field $X:\rn{2n}\rightarrow\rn{2n}$ is said to be $\varphi$-reversible if $\varphi_*X=-X$, where $\varphi:\rn{2n}\rightarrow\rn{2n}$ is an involutive diffeomorphism. This implies that if ${\bf x}(t)$ is a solution of \[\dot {\bf x}=X({\bf x})\] then $\varphi({\bf x}(-t))$ is also a solution. In this paper we assume that $\dim {\rm Fix}(\varphi)=\frac n2$. % In this paper we suppose that $\varphi^2=Id$ and $\dim Fix(\varphi)=\frac n2$. Then we can use the following result.
We have the following classical result.

\begin{lemma}[Montgomery-Bochner Theorem]\label{tmb}Let $\varphi:\rn{2n}\rightarrow\rn{2n}$ be a diffeomorphism with $\varphi^2=Id$ and $\varphi({\bf 0})={\bf 0}$. Then $\varphi$ and $D\varphi(0)$ are conjugate.
\end{lemma}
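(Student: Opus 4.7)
The plan is to use the classical averaging trick for the action of the cyclic group $\{Id,\varphi\}$ of order two, which linearizes the involution in one shot (no iterative scheme is needed).

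First I would set $L:=D\varphi({\bf 0})$ and observe that differentiating $\varphi\circ\varphi=Id$ at the origin gives $L^2=Id$. In particular $L$ is invertible with $L^{-1}=L$. This is the only algebraic input required.

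Next I would define the candidate conjugacy by averaging $\varphi$ against its linear part:
\bq
h(x) \; = \; \tfrac{1}{2}\bigl(x+L\,\varphi(x)\bigr).
\eq
A direct computation gives $Dh({\bf 0})=\tfrac12(Id+L\cdot L)=Id$, so by the inverse function theorem $h$ is a local diffeomorphism fixing the origin. The key verification is then the intertwining identity $h\circ\varphi=L\circ h$: expanding both sides,
\bq
h(\varphi(x))=\tfrac{1}{2}\bigl(\varphi(x)+L\,\varphi^{2}(x)\bigr)=\tfrac{1}{2}\bigl(\varphi(x)+Lx\bigr),
\eq
\bq
L(h(x))=\tfrac{1}{2}\bigl(Lx+L^{2}\varphi(x)\bigr)=\tfrac{1}{2}\bigl(Lx+\varphi(x)\bigr),
\eq
and the two expressions coincide because $L^2=Id$. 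Therefore $\varphi=h^{-1}\circ L\circ h$ as germs at the origin, which is exactly the asserted conjugacy between $\varphi$ and $D\varphi({\bf 0})$.

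There is really no serious obstacle in this proof; the only subtle point worth emphasizing is why the naive averaging works for an involution when, for a general compact group action, one would need to average the whole orbit and possibly iterate. Here the group has order two, the average is a single explicit formula, and the identity $L^{2}=Id$ (itself a consequence of $\varphi^{2}=Id$) is precisely what makes both the differential at $0$ equal to the identity and the intertwining relation hold exactly rather than only up to higher order. So the main thing to be careful about is simply to record the two short computations above cleanly.
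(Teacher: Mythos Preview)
Your proof is correct and is essentially identical to the paper's: the paper defines $\phi=Id+D\varphi(0)\varphi$ (your $h$ up to the harmless factor $\tfrac12$) and checks the same intertwining identity $\phi\circ\varphi=D\varphi(0)\circ\phi$. You add the explicit verification that $Dh(0)=Id$, which the paper leaves implicit.
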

\begin{proof}The proof is straightforward. If $\phi=Id+D\varphi(0)\varphi$, then:
\begin{eqnarray*}\phi(\varphi({\bf x}))&=&(Id+D\varphi(0)\varphi)(\varphi({\bf x}))\\
&=&\varphi({\bf x})+(D\varphi(0)\varphi)(\varphi({\bf x}))\\
&=&\varphi({\bf x})+D\varphi(0){\bf x}\\
&=&D\varphi(0)({\bf x}+D\varphi(0)\varphi({\bf x}))\\
&=&D\varphi(0)\phi({\bf x}).
\end{eqnarray*}
\end{proof}

Let $X:\rn{2n}\rightarrow\rn{2n}$ be a germ of $\varphi$-reversible vector field and suppose $X({\bf 0})=\varphi({\bf 0})={\bf 0}$ and $\dim {\rm Fix}(\varphi)=n$. By Lemma \ref{tmb}, we can choose a linear $\varphi:\rn{4}\rightarrow\rn{4}$ to work with. Fix for instance $\varphi_0(x_1,y_1,\ldots,x_n,y_n)=(x_1,-y_1,\ldots,x_n,-y_n)$.

\subsection{Hamiltonian vector fields}

We present a definition of Hamiltonian vector field adequate for a local study. It can be derived from the most general definitions by the Darboux theorem (see \cite{mar1}).

Let
\[\Omega=\left(
\begin{array}{cc}
0&Id_n\\
-Id_n&0
\end{array}\right),\]
where $Id_n$ is the $n\times n$ identity matrix. A matrix $M$ is said to be sympletic if $M^t\Omega M=\Omega$.

A vector field $X:\rn{2n}\rightarrow\rn{2n}$ is said to be Hamiltonian if there exists a function $H:\rn{2n}\rightarrow\rn{}$ and a sympletic matrix $J$ such that $X=J\nabla H$, where $\nabla H$ is the gradient of $H$. In this paper we fix the sympletic matrix \[J=\left(
\begin{array}{ccccc}
0&-1&&&\\
1&0&&&\\
&&\ddots&&\\
&&&0&-1\\
&&&1&0
\end{array}\right)\]

\subsection{Normal forms\label{sec.fn}}
Consider a system of differential equations
\bq\label{fn1}\dot {\bf x}=f({\bf x}),\eq
with ${\bf x}\in\rn{n}$ and $f({\bf 0})={\bf 0}$.
We seek a coordinate change under which system \eqref{fn1} becomes ``simpler'' near the origin, that is, we want to eliminate some terms of the Taylor expansion of $f$ around ${\bf 0}$.

Write $f({\bf x})=A{\bf x}+f_2({\bf x})+f_3({\bf x})+h.o.t.$, where $A=Df({\bf 0})$ and $f_l$ is homogeneous of degree $l$. Consider a change of coordinates ${\bf x}={\bf y}+h_2({\bf x})$ with $h_2$ homogeneous of degree 2. With this change of coordinates, and using the fact that $(Id+Dh_2)^{-1}=Id-Dh_2+h.o.t.$, \eqref{fn1} writes as
\[{\bf y}=A{\bf y}+Ah_2({\bf y})-Dh_2({\bf y})A{\bf y}+f_2({\bf y})+h.o.t.\]

Denote $L_A^{(2)}(h_2)({\bf y})=Ah_2({\bf y})-Dh_2({\bf y})A{\bf y}$. If $f_2$ is in the range of $L_A^{(2)}$, then we can choose $h_2$ that eliminates the quadratic terms in \eqref{fn1}. However, in general $L_A^{(2)}$ is not surjective. The terms of $f_2$ that do not lie in the range of $L_A^{(2)}$ are called resonant terms. The resonant terms cannot be removed from \eqref{fn1}.

It should be clear that the procedure above can be repeated to higher order terms using the operator $L_A^{(m)}$, thus eliminating all the nonresonant terms of \eqref{fn1}.

\begin{theorem}[Poincar\'e-Dulac, \cite{pd1}]\label{tpd}
A system of differential equations
\bq\label{fg.1}\dot {\bf x}=A{\bf x}+f_2({\bf x})+h.o.t.\eq
can be formally reduced to
\bq\label{fg.2}\dot {\bf y}=A{\bf y}+\sum_{k=2}^\infty g_k({\bf y})\eq
where $g_k$ is formed by resonant terms, with $D g_k({\bf y})A{\bf y}-Ag_k({\bf y})=0$, for all $k\geq 2$. System \eqref{fg.2} is said to be the normal form of system \eqref{fg.1}.
\end{theorem}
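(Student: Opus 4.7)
The plan is to proceed by induction on the order $m\geq 2$, using at each step a near-identity polynomial change of coordinates to eliminate all nonresonant homogeneous terms of degree $m$ while leaving untouched the resonant terms already produced at lower orders. The central linear-algebraic object is the homological operator $L_A^{(m)}:H_m\to H_m$ introduced above, acting on the finite-dimensional space $H_m$ of homogeneous vector-valued polynomials of degree $m$ on $\rn{n}$.

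Assume inductively that the system has been formally conjugated to
\bq \dot{\mathbf{y}}=A\mathbf{y}+g_2(\mathbf{y})+\cdots+g_{m-1}(\mathbf{y})+\tilde f_m(\mathbf{y})+h.o.t., \eq
with $L_A^{(k)}(g_k)=0$ for $2\leq k<m$. Apply the transformation $\mathbf{y}=\mathbf{z}+h_m(\mathbf{z})$ with $h_m\in H_m$ to be determined. Using $(I+Dh_m(\mathbf{z}))^{-1}=I-Dh_m(\mathbf{z})+h.o.t.$ and expanding each term by Taylor, the degree-$m$ part of the new vector field becomes $\tilde f_m(\mathbf{z})+Ah_m(\mathbf{z})-Dh_m(\mathbf{z})A\mathbf{z}=\tilde f_m(\mathbf{z})+L_A^{(m)}(h_m)(\mathbf{z})$; an essential verification is that the lower-order terms $g_k(\mathbf{z})$ with $k<m$ are not modified at order $\leq m$, which follows from the fact that $g_k(\mathbf{z}+h_m(\mathbf{z}))-g_k(\mathbf{z})$ produces only monomials of degree $\geq k+m-1>m$.

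To close the inductive step I would use the splitting $H_m=\ker L_A^{(m)}\oplus\mathrm{Range}(L_A^{(m)})$ to decompose $\tilde f_m=g_m+r_m$ with $g_m$ resonant and $r_m$ in the image, then solve $L_A^{(m)}(h_m)=-r_m$ so that only $g_m$ survives at order $m$. The main obstacle I expect is justifying this splitting: it is immediate when $A$ is semisimple, since then $L_A^{(m)}$ is itself semisimple on $H_m$, but for a general $A$ with nontrivial nilpotent part the kernel and range can intersect. The standard workaround is to write the Jordan decomposition $A=S+N$, replace $L_A^{(m)}$ by $L_S^{(m)}$ in the definition of resonance, and observe that $L_S^{(m)}$ is semisimple so the direct sum then holds; the characterization of $g_k$ is accordingly weakened to $L_S^{(k)}(g_k)=0$. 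After iterating the construction indefinitely, the desired normal form $\dot{\mathbf{y}}=A\mathbf{y}+\sum_{k\geq 2}g_k(\mathbf{y})$ is obtained as a \emph{formal} power series, with no claim about convergence of the infinite composition of polynomial coordinate changes.
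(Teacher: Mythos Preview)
Your proposal is correct and follows exactly the approach the paper sketches in the paragraphs preceding the theorem: the paper does not give a separate proof, but rather derives the degree-$2$ homological equation, introduces $L_A^{(m)}$, and then states that ``the procedure above can be repeated to higher order terms using the operator $L_A^{(m)}$, thus eliminating all the nonresonant terms.'' Your write-up simply fills in this induction carefully, and your verification that the change at order $m$ does not disturb the already-normalized terms $g_2,\dots,g_{m-1}$ is a point the paper leaves implicit.

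The one place you go beyond the paper is your discussion of the non-semisimple case and the Jordan decomposition $A=S+N$. The paper never addresses this: immediately after the theorem it assumes $A$ diagonal and computes the eigenvalues of $L_A^{(m)}$ on monomials, and in all applications in the paper $A$ is semisimple (purely imaginary or real simple eigenvalues). So for the purposes of this paper your caveat is unnecessary, though it is the right thing to say if one wants the theorem in the generality in which it is stated.
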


Note that the resonant terms in the Taylor expansion of $f$ depend only on the linear part of $f$. They are called resonant terms because they are related to the existence of resonances relations among the eigenvalues of $A=Df({\bf 0})$. Indeed, assume $A$ is a diagonal matrix with eigenvalues $\lambda_1,\ldots,\lambda_n\in\mathbb C$. Let $H^{m,n}$ denote the space of the $m$-homogeneous monomials in $\rn{n}$, ${\bf x}^m\frac{\partial}{\partial x_s}$, and $L_A^{(m)}:H^{m,n}\rightarrow H^{m,n}$ be defined by $L_A^{(m)}(h_m)({\bf x})=Ah_m({\bf x})-Dh_m({\bf x})A{\bf x}$, where ${\bf x}^m=x_1^{m_1}\cdots x_n^{m_n}$.

The eigenvectors of $L_A^{(m)}$ are the monomials ${\bf x}^m\frac{\partial}{\partial x_s}$ and the following relation holds \[L_A^{(m)}\left({\bf x}^m\frac{\partial}{\partial x_s}\right)=[(m,\lambda)-\lambda_s]{\bf x}^m\frac{\partial}{\partial x_s},\] where $(m,\lambda)=\sum_j m_j\lambda_j$ (see \cite{pd1}).

So if the the relation $(m,\lambda)-\lambda_s=0$ for some $s$ and $m$, then the monomial ${\bf x}^m\frac{\partial}{\partial x_s}$ is resonant (this monomial is in the kernel of $L_A^{(m)}$).

We remark that it is possible to compute a reversible normal form for a reversible vector field, see \cite{gaeta}.

For Hamiltonian vector fields, there is a special normal form called Birkhoff normal form. In this case, the homological operator acts directly on the Hamiltonian function, and not on the underlying vector field. See \cite{fnbirkhoff}) for more details on the Birkhoff normal forms.

\section{State of the art}

In a paper from 1917 \cite{birkhoff1}, Birkhoff was studying some conservative dynamical systems with two degree of freedom. He realized that imposing some ``reversibility'' (this term was used without any formal definition) conditions would simplify the analysis of his model, a set of equations of motion of a Lagrangian system.

Using these new conditions, he could simplify his model and deduce the existence of periodic orbits. It was clear for Birkhoff that the reversible results should be valid for this original hamiltonian context, but the formal proof for the ``irreversible'' case would be harder.

In 1935 \cite{baley1}, Price wrote a survey on reversible systems, with the emphasis on those which have an oval of zero velocity. The reversible vector fields he considered were a special case of conservative vector fields. Price constructed such vector fields in a surface of revolution, by rotating a vector field with a unique singular point. The revolution of the singularity created the oval of singular points.

The dichotomy between reversible and hamiltonian vector fields started to be explicit, as much of the results of Price do not really depended on the conservative structure, but on the symmetry conditions.

The fundamental property of $R$-reversible vector fields, that the orbit through $R(x_0)$ is just the orbit through $x_0$ reflected by $R$ and traced in the backwards, is stated in the paper of Price. The involution $R$ was always taken as $R(x,y)=(x,-y)$. As in the Birkhoff's paper, no formal definition was given by Price.

Just in 1976, Devaney \cite{devaney} gave definition of reversible vector fields, independently of the conservative conditions. Devaney showed Kupka-Smale Theorem for reversible vector fields, based on the Robinson's proof of Kupka-Smale for hamiltonian vector fields.

Two classical results for hamiltonian vector fields were also adapted to the reversible context by Devaney: the Lyapunov Center Theorem, and a result on symmetry homoclinic orbits (under some conditions, they are limit of periodic orbits).

Some years ago, Moser \cite{sev} presented a version of the KAM Theory for reversible vector fields. The proof in the reversible context was much more easier than the original, for hamiltonian vector fields.

In 1983, Arnold \cite{ap} posed the question in the introduction of this paper, suggesting that every hamiltonian result could have be adapted to reversible systems, and reciprocally.

It seems that the first progress in this direction was made in 1994 by Sanders, van der Meer and Vanderbauwhere \cite{sand}. They have shown that every reversible non-Hamiltonian vector field in $\rn{4}$ at nonsemisimple $1:1$-resonance can be split into a Hamiltonian and a non-Hamiltonian part, and the non-Hamiltonian part can be eliminated using normal form reductions. The use of normal form reductions obligated the results to be formal.

In \cite{mt-cpaa}, Martins and Teixeira solved the problem of the similarity of reversible and hamiltonian vector fields for some cases. They proved that reversible vector fields in $\rn{4}$ with an simple equilibrium of elliptic, saddle or saddle-elliptic type can be made Hamiltonian using changes of coordinates. They also showed that using time-reparametrizations and changes of coordinates, it is possible to transform a reversible vector field in a polynomial hamiltonian vector field.

We review some results in \cite{mt-cpaa}. Let $\sigma(A)$ denote the set of the eigenvalues of the matrix $A$.

\begin{proposition}\label{prop2d} Let $X:\rn{2}\rightarrow\rn{2}$ be a vector field with $X({\bf 0})={\bf 0}$ and $A=DX({\bf 0})$, with $\det(A)\neq 0$. Suppose $X$ is $\varphi$-reversible, with $\varphi(x,y)=(x,-y)$. Then $X$ is formally conjugated to a Hamiltonian vector field.
\end{proposition}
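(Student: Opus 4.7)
The strategy is to combine the Poincaré--Dulac normalization with the algebraic constraints imposed by reversibility, and then observe that the resulting $2$-dimensional normal form is automatically Hamiltonian.

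First, reversibility forces the spectrum of $A = DX(\zero)$ to be symmetric about the origin, so together with $\det A \neq 0$ there are exactly two cases: $\sigma(A) = \{\pm i\alpha\}$ (elliptic) or $\sigma(A) = \{\pm\alpha\}$ (saddle) for some $\alpha>0$. I would start by applying a $\varphi$-preserving linear change of coordinates to put $A$ into canonical form. In the elliptic case this keeps $\varphi(x,y)=(x,-y)$ and gives the standard rotation matrix with entries $\pm\alpha$; in the saddle case the diagonalization $A = \mathrm{diag}(\alpha,-\alpha)$, together with the relation $A\varphi + \varphi A = 0$, turns $\varphi$ into the swap $(x,y)\mapsto(y,x)$.

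Next, I would apply Theorem \ref{tpd} to bring $X$ into Poincaré--Dulac normal form, choosing the generators of the normalizing transformations to be $\varphi$-equivariant so that $\varphi$-reversibility is preserved at every order (a standard construction in reversible normal form theory; see \cite{gaeta}). In the elliptic case, introducing $z=x+iy$ and using $\lambda = (i\alpha,-i\alpha)$, the resonance conditions $(m,\lambda)=\lambda_s$ single out the monomials $z(z\bar z)^k\partial_z$ and $\bar z(z\bar z)^k\partial_{\bar z}$. Reality of $X$ together with the reversibility condition $f(\bar z,z) = -\overline{f(z,\bar z)}$ (writing $\dot z = f(z,\bar z)$) forces the coefficient series to be purely imaginary, so in real coordinates the normal form reads
\begin{equation*}
\dot x = -y\, h(x^2 + y^2), \qquad \dot y = x\, h(x^2+y^2),
\end{equation*}
with $h$ a formal power series satisfying $h(0)=\alpha$. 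This is Hamiltonian with $H(x,y) = \tfrac{1}{2}\, F(x^2+y^2)$, where $F' = h$.

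In the saddle case, the resonance conditions with $\lambda=(\alpha,-\alpha)$ single out the monomials $x(xy)^k\partial_x$ and $y(xy)^k\partial_y$, so the normal form has the shape $\dot x = x\, h_1(xy)$, $\dot y = -y\, h_2(xy)$ with $h_j(0)=\alpha$. Reversibility under the swap involution imposes $h_1=h_2=:h$, hence
\begin{equation*}
\dot x = x\, h(xy), \qquad \dot y = -y\, h(xy).
\end{equation*}
This vector field is divergence-free on $\R^2$, so it is Hamiltonian with $H(x,y) = -F(xy)$, $F'=h$. The only delicate point in the argument is the equivariant choice of normalizing transformations at each order, ensuring that reversibility is transmitted from $X$ to its normal form; once this is granted, the proposition reduces to the elementary verifications above that the reversible normal form is Hamiltonian.
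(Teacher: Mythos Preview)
Your argument is correct and follows the same route as the paper: identify the two spectral cases forced by reversibility and $\det A\neq 0$, pass to the reversible Poincar\'e--Dulac normal form, and observe that the resulting planar system is Hamiltonian by exhibiting an explicit first integral. You supply more justification than the paper does---in particular, you make explicit that diagonalizing $A$ in the saddle case conjugates $\varphi$ to the swap $(x,y)\mapsto(y,x)$, which is what forces $h_1=h_2$; the paper simply asserts the normal form (with a single $f$) and writes down the Hamiltonian.
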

\begin{proof} Note that $\sigma(A)=\{ \alpha i,-\alpha i\}$, $\alpha>0$, or $\sigma(A)=\{\beta,-\beta\}$, $\beta>0$. So the normal form $\tilde X$ of $X$ is given by
\bq\label{fn.2d.ee}
\begin{array}{lcl}
\dot x&=&-\alpha y -yf(x^2+y^2),\\
\dot y&=&\alpha x+xf(x^2+y^2)
\end{array}
\eq
in the first case and by \bq\label{fn.2d.ss}
\begin{array}{lcl}
\dot x&=&\beta x +xf(xy),\\
\dot y&=&-\beta y-yf(sy)
\end{array}
\eq
in the second case, where $f,g$ are formal functions without constant terms. Recall that $X$ and $\tilde X$ are formally conjugated.
System \eqref{fn.2d.ee} is Hamiltonian with respect to \[H_e({\bf x})=\dfrac{\alpha}2(x^2+y^2)+\int yf(x^2+y^2)\,dy\] and system \eqref{fn.2d.ss} is Hamiltonian with respect to \[H_s({\bf x})=-\beta xy-x\int g(xy)\, dy.\]
So a reversible vector field in $\rn{2}$ with a simple equilibria at ${\bf 0}$ is formally Hamiltonian around ${\bf 0}$.
\end{proof}

\begin{proposition}\label{prop.mtcpaa} Let $X:\rn{4}\rightarrow\rn{4}$ be a vector field with $X({\bf 0})={\bf 0}$ and $A=DX({\bf 0})$ with $\det(A)\neq 0$. Consider also that if $\lambda\in\sigma(A)$, then $\lambda\in\rn{}$ or $i\lambda\in\rn{}$. Suppose that $X$ is $\varphi$-reversible, with $\dim Fix(\varphi)=2$. Then $X$ is formally conjugated to a Hamiltonian vector field, and formally orbitally equivalent to a polynomial Hamiltonian vector field.
\end{proposition}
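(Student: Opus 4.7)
The plan is to generalize the two-dimensional proof of Proposition \ref{prop2d}: put $X$ into a reversible normal form, exhibit a Hamiltonian that generates it, and then use a time-reparametrization for the polynomial refinement.

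By Lemma \ref{tmb} we may linearize $\varphi$ and take $\varphi_0(x_1,y_1,x_2,y_2)=(x_1,-y_1,x_2,-y_2)$. The assumption $\det A\neq 0$, together with reversibility (which forces $\sigma(A)=-\sigma(A)$) and the hypothesis that each eigenvalue is real or purely imaginary, leaves three cases for the linear part up to a reversible linear change of basis: (i)~elliptic-elliptic, $\sigma(A)=\{\pm\alpha_1 i,\pm\alpha_2 i\}$; (ii)~saddle-saddle, $\sigma(A)=\{\pm\beta_1,\pm\beta_2\}$; (iii)~saddle-center, $\sigma(A)=\{\pm\alpha i,\pm\beta\}$. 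In each case, place $A$ in block-diagonal form compatible with $\varphi_0$ and apply the reversible version of Theorem \ref{tpd} (see \cite{gaeta}) to obtain a formal conjugacy from $X$ to a normal form $\tilde X$ consisting only of $\varphi$-reversible resonant monomials.

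Now analyze $\tilde X$ case by case. Using the invariants of the linear flow ($I_j=(x_j^2+y_j^2)/2$ on each elliptic block, $u_j=x_j y_j$ on each saddle block), decompose any $A$-centralizing vector field into radial and tangential components inside each block. The reversibility condition $\varphi_{0,*}\tilde X=-\tilde X$ annihilates the radial components, so in case (i) the normal form reads
\[
\tilde X=g_1(I_1,I_2)T_1+g_2(I_1,I_2)T_2,\quad T_j=-y_j\partial_{x_j}+x_j\partial_{y_j},\quad g_j(0,0)=\alpha_j,
\]
with analogous expressions in cases (ii) and (iii). One then seeks a formal $\varphi$-invariant $H$, depending only on the invariants, such that $\tilde X=J\nabla H$; in case (i) this requires $\partial_{I_1}H=g_1$ and $\partial_{I_2}H=g_2$, equivalently the integrability condition $\partial_{I_2}g_1=\partial_{I_1}g_2$.

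This integrability condition is the main obstacle: it is not forced by reversibility alone, so one must remove the non-integrable part by additional formal $\varphi$-equivariant changes of coordinates drawn from the centralizer of $A$ in the group of formal $\varphi$-equivariant diffeomorphisms. In dimension four this centralizer has enough free parameters to kill the obstruction order by order, reducing $\tilde X$ to a Hamiltonian normal form. Finally, for the orbital equivalence to a polynomial Hamiltonian, multiply the Hamiltonian normal form by a nonvanishing formal scalar $f$ of the invariants chosen so that $f\cdot H$ truncates to a polynomial up to the desired order; the nondegeneracy $g_j(\zero)\neq 0$ ensures that such an $f$ exists as a formal series with $f(\zero)\neq 0$, yielding the second claim.
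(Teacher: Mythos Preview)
The paper does not actually prove this proposition in the text; it is quoted from \cite{mt-cpaa}, with the comment that ``the proof is much more complicated than the proof of Proposition~\ref{prop2d}, as in the 4D case the normal forms are not directly Hamiltonian.'' So there is no in-paper argument to compare with; one can only weigh your outline against the method used later for Theorem~A and against what the cited result claims.

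Your reduction to a reversible normal form and your identification of the key obstruction $\partial_{I_2}g_1=\partial_{I_1}g_2$ are exactly right, and they match the paper's remark that the 4D normal form is not automatically Hamiltonian. The gap is step~6: you assert that the $\varphi$-equivariant centralizer of $A$ ``has enough free parameters to kill the obstruction order by order,'' but this is the entire content of the proposition and you give no mechanism. In the paper's treatment of the general case (Theorem~A, equation~\eqref{SYSTEM} and the linear system~\eqref{l.s}) that mechanism is made explicit, and it only works under two extra hypotheses: a genericity condition $\mathcal F\neq 0$ on the cubic coefficients, and the assumption that $j^3X$ is already Hamiltonian. Proposition~\ref{prop.mtcpaa} carries neither hypothesis, so if you want to prove it along these lines you must explain what is special about $n=2$ that lets you dispense with them (for instance, how the first nontrivial integrability relation $a_{1,e_2}^{[1]}=a_{2,e_1}^{[1]}$ is arranged without assuming it).

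There is also a genuine error in the polynomial step. Multiplying the vector field by a scalar $f$ is not the same as multiplying the Hamiltonian by $f$: if $\tilde X=J\nabla H$ then $f\tilde X=J(f\nabla H)$, and $f\nabla H$ is a gradient only when $df\wedge dH=0$, i.e.\ when $f$ is a function of $H$. So ``choose $f$ so that $f\cdot H$ is polynomial'' does not produce a polynomial Hamiltonian vector field orbitally equivalent to $\tilde X$. What must be shown is that one can pick $f=f(I_1,I_2)$ (or $f(u_1,u_2)$, etc.) with $f(\zero)\neq 0$ so that the pair $(fg_1,fg_2)$ is simultaneously polynomial and closed; this is a two-constraint problem and needs an argument, not just the nonvanishing of $g_j(\zero)$.
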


The proof of Proposition \ref{prop.mtcpaa} is much more complicated than the proof of Proposition \ref{prop2d}, as in the $4D$-case the normal forms are not directly hamiltonian.

\section{Detailed statements of the results\label{sec.detail}}

Recall that two vector fields $X$ and $Y$ are formally conjugated if there is a formal change of coordinates $\Psi$ such that $\Psi_*X=Y$. This means that for every $k<\infty$, there is an analytic change of coordinates $\Psi_k$ such that $j^k ({\Psi_k}_*X-Y)=0$.

A weak equivalence relation is the formal orbital equivalence. Given two vector fields $X,Y$ with a common singularity $x_0$, we say that $X,Y$ are formally orbitally equivalent around $x_0$ if there is a function $f$ without zeroes near $x_0$ such that $f\cdot X$ and $Y$ are formally conjugated. Note that formal conjugacy implies formal orbital equivalence (take $f=1$).

Let $X\in\Omega_{0,e,\infty}^{2n}(\varphi)$. Let $\tilde X$ be a normal form for $X$ around ${\bf 0}$. So $\tilde X$ writes as $\tilde X({\bf x})=A{\bf x}+f_3({\bf x})+f_5({\bf x})+\ldots$, where $f_k$ is $k$-homogeneous (the even order terms in the Taylor expansion of $\tilde X$ vanishes).

The nonresonant hypothesis on the equilibria ${\bf 0}$ assures that
\bq\label{f2jm1}f_{2j+1}({\bf x})=\left(
\begin{array}{l}
-y_1 f_1^{[j]}(\Delta_1,\ldots,\Delta_n)\\
x_1  f_1^{[j]}(\Delta_1,\ldots,\Delta_n)\\
\vdots\\
-y_{n} f_n^{[j]}(\Delta_1,\ldots,\Delta_n)\\
x_n  f_n^{[j]}(\Delta_1,\ldots,\Delta_n)
\end{array}
\right)\eq
where $f_i^{[j]}$ is a $j$-homogeneous polynomial and $\Delta_j=x_j^2+y_j^2$ for ${\bf x}=(x_1,y_1,\ldots,x_n,y_n)$ (note that $f_i^{[j]}$ is a $2j$-degree polynomial in $x_1,y_1,\ldots,x_n,y_n$).

Note that, for $n=2$ (dimension 4), if we consider $X$ a $\mathbb D_4$-reversible-equivariant vector field such that $\zero$ is a $r_1:r_2$-resonant vector field with $r_1,r_2$ odd numbers, $r_1r_2>1$, then $f_{2j+1}$ also has this expression. More details, see \cite{mt-aabc}.

Our aim is to take $\tilde X$ to a Hamiltonian vector field $Y$. Write $Y({\bf x})=A{\bf x}+g_3({\bf x})+g_5({\bf x})+\ldots$, where $g_k=J\nabla H_{k+1}$ for some $k+1$-homogeneous function $H_{k+1}:\rn{2n}\rightarrow\rn{}$.

For $k\geq 1$, consider a change of coordinates $\Psi_{2k+1}=Id+\psi_{2k+1}$ with $\psi_{2k+1}$ a vector valued $(2k+1)$-homogeneous function and a time-reparametrization $\rho_{2k+2}=1+\theta_{2k+2}$, for $\theta_{2k+2}$ a $2k+2$-homogeneous function.

If \bq\label{ee.1}j^{2k+3}\left( {{\Psi_{2k+1}}_*}(\rho_{2k+2}\cdot \tilde X)-Y\right)=0\eq
holds for every $k$, then $\tilde X$ is formally orbitally equivalent to $Y$. As $X$ is formally conjugated to $\tilde X$, then $X$ is formally orbitally equivalent to $Y$.

Consider $\psi_{2k+1}$ in the form
\bq\label{psi2km1}
\psi_{2k+1}=\left(
\begin{array}{l}
x_1\sigma_1^{[k]}( \Delta_1,\ldots,\Delta_n)\\
y_1\sigma_1^{[k]}( \Delta_1,\ldots,\Delta_n)\\
\vdots\\
x_n\sigma_n^{[k]}( \Delta_1,\ldots,\Delta_n)\\
y_n\sigma_n^{[k]}( \Delta_1,\ldots,\Delta_n)
\end{array}
\right)
\eq
with $\sigma_j^{[k]}$ a $k$-degree polynomial.

Suppose \eqref{ee.1} is true for every $k'< k$. For $k'=k$ equation \eqref{ee.1} is equivalent to the system
\bq\label{SYSTEM}f_{2k+3}-g_{2k+3}+A{\bf x}\cdot \theta_{2k+2}=2f_3^\Sigma,\eq
where $f_3^\Sigma=f_3(\sigma_1 \Delta_1,\ldots,\sigma_n\Delta_n)$ and $\sigma_j=\sigma_j(\Delta_1,\ldots,\Delta_n)$.

So to prove Theorem A for $X\in\Omega_{0,e,\infty}^{2n}(\varphi)$ it is suffice to show that, under some genericity condition, \eqref{SYSTEM} has a solution with respect to $g_{2k+3}$, $\theta_{2k+2}$ and $f_3^\Sigma$ for every $k\geq 1$, as the first step of the induction process ($j^3 \tilde X$ is Hamiltonian) is assumed true.

The proof of Theorem A for $X\in\Omega_{0,s,\infty}^{2n}(\varphi)$, the saddle case, is very similar to the proof of the elliptic case. We just mention the main changes.

The expression for $f_{2j+1}$, given in \eqref{f2jm1}, changes to
\bq\label{f2jm1new}f_{2j+1}({\bf x})=\left(
\begin{array}{l}
x_1 f_1^{[j]}(\Gamma_1,\ldots,\Gamma_n)\\
-y_1  f_1^{[j]}(\Gamma_1,\ldots,\Gamma_n)\\
\vdots\\
x_{n} f_n^{[j]}(\Gamma_1,\ldots,\Gamma_n)\\
-y_n  f_n^{[j]}(\Gamma_1,\ldots,\Gamma_n)
\end{array}
\right)\eq
where $f_i^{[j]}$ is a $j$-homogeneous polynomial and $\Gamma_j=x_jy_j$ for ${\bf x}=(x_1,y_1,\ldots,x_n,y_n)$.

In the saddle case we also have to change $\psi_{2k+1}$, from \eqref{psi2km1} to
\bq\label{psi2km1new}
\psi_{2k+1}=\left(
\begin{array}{l}
x_1\sigma_1^{[k]}( \Gamma_1,\ldots,\Gamma_n)\\
y_1\sigma_1^{[k]}( \Gamma_1,\ldots,\Gamma_n)\\
\vdots\\
x_n\sigma_n^{[k]}( \Gamma_1,\ldots,\Gamma_n)\\
y_n\sigma_n^{[k]}(\Gamma_1,\ldots,\Gamma_n)
\end{array}
\right)\eq
with $\sigma_j^{[k]}$ a $k$-degree polynomial.

The final equation, \eqref{SYSTEM}, remains the same, but now with $f_3^\Sigma=f_3(\sigma_1 \Gamma_1,\ldots,\sigma_n\Gamma_n)$ and $\sigma_j=\sigma_j(\Gamma_1,\ldots,\Gamma_n)$.

If $n=3$ (dimension $6$), then is possible to prove a more specific result. Let $X\in \Omega_{0,e,\infty}^6(\varphi)$ be a reversible vector field such that $j^3\tilde X$ is a Hamiltonian vector field. Then a normal form for $X$ is given by
\[\tilde X: 
\begin{array}{lcl}
\dot x_1&=&-\alpha_1 y_1-y_1 \sum_{i+j+k=1}^\infty a_{i,j,k}\Delta_1^i\Delta_2^j\Delta_3^k,\\
\dot y_1&=&\alpha_1 x_1 +x_1\sum_{i+j+k=1}^\infty a_{i,j,k}\Delta_1^i\Delta_2^j\Delta_3^k,\\
\dot x_2&=&-\alpha_2 y_2-y_2\sum_{i+j+k=1}^\infty b_{i,j,k}\Delta_1^i\Delta_2^j\Delta_3^k,\\
\dot y_2&=&\alpha_2 x_2 +x_2\sum_{i+j+k=1}^\infty b_{i,j,k}\Delta_1^i\Delta_2^j\Delta_3^k,\\
\dot x_3&=&-\alpha_3 y_3-y_3\sum_{i+j+k=1}^\infty c_{i,j,k}\Delta_1^i\Delta_2^j\Delta_3^k,\\
\dot y_3&=&\alpha_3 x_3 +x_3\sum_{i+j+k=1}^\infty c_{i,j,k}\Delta_1^i\Delta_2^j\Delta_3^k,
\end{array}
\]
where $a_{i,j,k},b_{i,j,k},c_{i,j,k}$ for $i+j+k=K$ depends on the coefficients of $j^K X$.

By Theorem \ref{tpd}, $X$ is formally conjugate to $\tilde X$. Now, using changes of coordinates and time-reparametrizations, we prove that $\tilde X$ is formally orbitally equivalent to a vector field of the form
\[\tilde{\tilde X}: 
\begin{array}{lcl}
\dot x_1&=&-\alpha_1 y_1-y_1 \left(\varepsilon_1\Delta_2+\varepsilon_2\Delta_3+\sum_{i=1}^\infty a_{i}\Delta_1^i\right), \\
\dot y_1&=&\alpha_1 x_1 +x_1\left(\varepsilon_1\Delta_2+\varepsilon_2\Delta_3+\sum_{i=1}^\infty a_{i}\Delta_1^i\right),\\
\dot x_2&=&-\alpha_2 y_2-y_2\left(\varepsilon_3\Delta_1+\varepsilon_4\Delta_3+\sum_{i=1}^\infty b_{i}\Delta_2^i\right),\\
\dot y_2&=&\alpha_2 x_2 +x_2\left(\varepsilon_3\Delta_1+\varepsilon_4\Delta_3+\sum_{i=1}^\infty b_{i}\Delta_2^i\right),\\
\dot x_3&=&-\alpha_3 y_3-y_3\left(\varepsilon_5\Delta_1+\varepsilon_6\Delta_2+\sum_{i=1}^\infty c_{i}\Delta_3^i\right),\\
\dot y_3&=&\alpha_3 x_3 +x_3\left(\varepsilon_5\Delta_1+\varepsilon_6\Delta_2+\sum_{i=1}^\infty c_{i}\Delta_3^i\right),
\end{array}
\]
where the coefficients are not necessarily the same as in $\tilde X$, and satisfy an additional relation, $\tilde{\tilde X}-j^3\tilde{\tilde X}=0$ is a decoupled Hamiltonian vector field.

The proof of the existence of the changes of coordinates and time-reparametrizations between $\tilde X$ and $\tilde{\tilde X}$ is an adaptation of the proof of Theorem A, and will be given in the next section.

\section{Proof of Theorem A}

We prove Theorem A and its corollaries just for the case $X\in\Omega_{0,e,\infty}^{2n}(\varphi)$. The other case can be proved using the changes indicated in Section \ref{sec.detail}.

We have to solve the system \eqref{SYSTEM}, that is,
$f_{2k+3}-g_{2k+3}+A{\bf x}\cdot \theta_{2k+2}=2f_3^\Sigma$.

It is easy to write out all the terms in this sum, except perhaps the right-hand side, $f_3^\Sigma=f_3(\sigma_1 \Delta_1,\ldots,\sigma_n\Delta_n)$.

System \eqref{SYSTEM} is equivalent to a linear system in the coefficients of $g_{2k+3}$, $\theta_{2k+2}$ and $\Sigma=(\sigma_1,\ldots,\sigma_n)$.

We write\\

\noindent $f_{2j-1}^{[k]}=-y_j\sum_{|I|=k+1}a_{j,I}^{[k]}\Delta_1^{i_1}\cdot\ldots\cdot\Delta_n^{i_n}   $, $j=1,\ldots,n$,\\
\noindent $f_{2j}^{[k]}=x_j\sum_{|I|=k+1}a_{j,I}^{[k]}\Delta_1^{i_1}\cdot\ldots\cdot\Delta_n^{i_n}   $, $j=1,\ldots,n$,

\noindent $\theta_{2k+2}({\bf x})=1+\sum_{|I|=k+1}\theta_I \Delta_1^{i_1}\cdot\ldots\cdot\Delta_n^{i_n}$,\\
\noindent $g_{2k+3}=J\nabla H_{2k+4}$, where $H_{2k+4}({\bf x})=\sum_{|I|=k+2} h_I \Delta_1^{i_1}\cdot\ldots\cdot\Delta_n^{i_n}$ and\\
\noindent $\sigma_j^{[k]}({\bf x})=\sum_{|I|=k} \mu_{j,I}\Delta_1^{i_1}\cdot\ldots\cdot\Delta_n^{i_n}$.

So \eqref{SYSTEM} is equivalent to the following linear system.
\bq\label{l.s}
\begin{array}{lcl}
a_{j,I}&=&-\alpha_j \theta_I \ (\textrm{from} \ A{\bf x}\cdot \theta_{2k+2})\\
&+&2(i_j+1)h_{i_1,\ldots,i_{j-1},i_j+1,i_{j+1},\ldots,i_n} \ (\textrm{from} \ g_{2k+3})\\
&+& \sum_{l=1}^n a_{j,e_r}^{[1]} \cdot \mu_{l,i_1,\ldots,i_{j-1},i_j-1,i_{j+1},\ldots,i_n} \ (\textrm{from} \ f_3^\Sigma),
\end{array}
\eq
for $j=1,\ldots, n$, where $e_r=(0,\ldots,0,1,0,\ldots,0)$ has $1$ in the position $r$ and $0$ otherwise. In the last summation we consider $\mu_{j,i_1,\ldots,i_{j-1},i_j-1,i_{j+1},\ldots,i_n}=0$ if $i_j-1<0$, and $a_{j,e_r}^{[1]}$ are the coefficients of $f_3$. Note that the relation $a_{j,e_r}^{[1]}=a_{r,e_j}^{[1]}$  holds between these coefficients, as $j^3 \tilde X$ is Hamiltonian.

Now put $\mathcal F=\prod_{j=1}^n\prod_{r=1}^n a_{j,e_r}^{[1]}$ and suppose $\mathcal F\neq 0$. This is a kind of nonvanishing condition for the diagonal elements of system \eqref{l.s}.

The triangular structure of \eqref{l.s}, and the condition $\mathcal F\neq 0$, guarantees that such system has a solution.

This concludes the proof of Theorem A.

\begin{proof}[Proof of Corollary 1]
The proof of Corollary 1 is a straightforward adaptation of the proof of Theorem A. The main difference is that in equation \eqref{SYSTEM} we have to consider $h_{j,I}=0$ for $I\neq (k+2)e_r$, for $j,r=1,\ldots,6$. As the condition $\mathcal F\neq 0$ is not affected by these conditions, the new system is also solvable.
\end{proof}

\begin{proof}[Proof of Corollary 2]
To prove the Corollary 2 it is suffice to adapt the proof of Theorem A. In equation \eqref{SYSTEM} we have to ignore the terms $A{\bf x}\cdot \theta_{2k+2}$, as these terms come from the time-reparametrizations. In dimension 6 there is sufficient freedom in the variables to solve the system.
\end{proof}

\section{Proof of Theorem B}
Let $X\in\Omega_{0,e}^{4}(\mathbb D_4)$ be as in the statement of Theorem B, that is, the origin is $r_1:r_2$-resonant, such that $r_1,r_2$ odd numbers with $r_1r_2>1$. 

Using the result from \cite{mt-aabc}, we obtain that a normal form of $X$ have the same expression of a nonresonant normal form. Then, the same proof of Theorem A can be applied to prove Theorem B.


\begin{thebibliography}{99}


\bibitem{mar1} {\sc R. Abraham and J. Marsden}, {\it Foundations of Mechanics}. Benjamin Cummings, London, 1978.

\bibitem{paty} {\sc F. Antoneli, P. H. Baptistelli, A. P. Dias, M. Manoel}, {\it Invariant theory and reversible-equivariant vector fields}. J. Pure Appl. Algebra {\bf 213}(5) (2009), 649-?663.

\bibitem{ap} {\sc V. I. Arnold}, {\it Arnold's problems}. Springer-Verlag, Berlin, 2004.

\bibitem{pd1} {\sc V. I. Arnold}, {\it Geometrical Methods in the Theory of Ordinary Differential Equations}.  A Series of Comprehensive Studies in Mathematics 250, Springer, 1996.
    
\bibitem{birkhoff1}{\sc G. D. Birkhoff,} {\it Dynamical Systems with Two Degrees of Freedom}. Trans. Amer. Math. Soc. {\bf 18}(2) (1917), 199--300.

 \bibitem{devaney} {\sc R. L. Devaney}, {\it Reversible diffeomorphisms and flows}. Trans. Amer. Math. Soc. {\bf 218} (1976), 89--113.


\bibitem{gaeta} {\sc G. Gaeta}, {\it Normal Forms of Reversible Dynamical Systems}. International Journal of Theoretical Physics {\bf 33}(9), 1994. 


\bibitem{mjt} {\sc A. Jacquemard, M. F. S. Lima and M. A. Teixeira}, {\it Degenerate resonances and branching of periodic orbits}. Annali di Matematica Pura ed Applicata {\bf 187}(1) (2008), 105--117.

\bibitem{rp} {\sc J. S. W. Lamb, M. F. S. Lima, R. M. Martins, M. A. Teixeira and J. Yang}, {\it On the Hamiltonian Structure of Normal Forms at Elliptic Equilibria of Reversible Vector Fields in $R^4$}. IMECC/Unicamp Research Report 05/10, 2010. Online at \url{http://www1.ime.unicamp.br/rel_pesq/relatorio.html}.

\bibitem{mt-cpaa} {\sc R. M. Martins, M. A. Teixeira}, {\it On the Similarity of Hamiltonian and Reversible Vector Fields in 4D.} Accepted, Communications in Pure and Applied Analysis, 2011.

\bibitem{mt-aabc} {\sc R. M. Martins, M. A. Teixeira}, {\it Reversible-equivariant systems and matricial equations.} Accepted, Anais da Academia Brasileira de Ci\^encias, 2011. Online at \url{http://arxiv.org/abs/0809.0299}.

\bibitem{fnbirkhoff} {\sc J. C. van der Meer}, {\it The Hamiltonian Hopf bifurcation}. Lecture Notes in Mathematics {\bf 1160}, Springer, Berlin, 1982.

\bibitem{sand} {\sc J. C. van der Meer, J. A. Sanders and A. Vanderbauwhede}, {\it Hamiltonian Structure of the Reversible Nonsemisimple 1:1 Resonance}, Dynamics, Bifurcation and Symmetry: New Trends and New Tools, Kluwer Academic Publishers, 1994.

\bibitem{baley1} {\sc G. B. Price}, {\it On Reversible Dynamical Systems}. Trans. Amer. Math. Soc. {\bf 37}(1), (1935), 51--79.

\bibitem{sev} {\sc M. B. Sevryuk}, {\it The finite-dimensional reversible KAM theory}. Phys. D {\bf 112}, (1935), 132--147.

\bibitem{marco1} {\sc M. A. Teixeira}, {\it Singularities of reversible vector fields}. Phys. D {\bf 100} (1997)(1--2), 101--118.


\end{thebibliography}
\end{document}